%
%
%
%
\documentclass[reqno]{amsart}
\usepackage{graphicx}

\usepackage{epstopdf}
\usepackage[justification=centering]{caption}
\usepackage{lineno,hyperref}
\usepackage{amsfonts,amsmath,latexsym,amssymb,amsthm}
\usepackage{geometry}
\newcommand{\rs}{{\mathcal R\,}}
\newcommand{\hs}{{\mathcal H\,}}

\modulolinenumbers[5]
\usepackage{makecell,multirow,diagbox}
\geometry{a4paper,left=2.4cm,right=2.4cm,top=2.4cm,bottom=3cm}
\hypersetup{
	colorlinks=true,
	linkcolor=blue
}
\hypersetup{
	citecolor=blue
}

\newtheorem{theorem}{Theorem}[section]
\newtheorem{lemma}{Lemma}[section]
\usepackage[justification=centering]{caption}
\newtheorem{corollary}[theorem]{Corollary}

\theoremstyle{remark}
\newtheorem{remark}[theorem]{Remark}

\numberwithin{equation}{section}
\allowdisplaybreaks[4]


\begin{document}

\title{Splitting of operator for frame inequalities in Hilbert spaces}

\author{Dongwei Li}
\address{School of Mathematics, HeFei University of Technology, 230009, P. R. China}
\email{dongweili@huft.edu.cn}

\subjclass[2000]{42C15, 47B99}



\keywords{frames, frames operator, Hilbert space}

\begin{abstract}
	In this paper, we obtain  a new type of inequalities  for frames, which are parametrized by a parameter $\lambda\in\rs$. By suitable choices of  $\lambda$, one obtains the previous results as special cases. Our new proof also makes the underlying mathematical structure that gives rise to these inequalities more transparent than previous approaches: Our proof shows that the main point is the splitting $S = S_1+ S_2$ of the positive deﬁnite frame operator S into the two positive semideﬁnite operators $S_1$ and $S_2$.
\end{abstract}

\maketitle

\section{Introduction}
Frames in Hilbert spaces were first introduced in 1952 by Duffin and Schaeffer \cite{duffin1952class} to study some deep problems in nonharmonic Fourier series,  reintroduced in 1986 by Daubechies, Grossmann and Meyer \cite{daubechies1986painless}, and today frames play important roles in many applications in several areas of mathematics, physics, and engineering, such as coding theory \cite{leng2011optimal,li2018frame}, sampling theory \cite{zhao2006perturbation}, quantum measurements \cite{eldar2002optimal},filter bank theory \cite{kovacevic2002filter} and image processing \cite{donoho2005continuous}.

Let $\hs$ be a separable space and $I$ a countable index set. A sequence  $\{f_i\}_{i\in I}$ of elements of $\hs$ is a frame for $\hs$ if there exist constants $A,~B>0$ such that
$$A\|f\|^2\le \sum_{i\in I}\left| \left\langle f,f_i\right\rangle \right| ^2\le B\|f\|^2,~~~\forall f\in\hs.$$
The number $A,~B$ are called lower and upper frame bounds, respectively. If $A=B$, then this frame is called an $A$-tight frame, and if $A=B=1$, then it is called a Parseval frame.

Suppose  $\{f_i\}_{i\in I}$ is a frame for $\hs$, then the frame operator is a self-adjoint positive invertible operators, which is given by
$$S:\hs\rightarrow\hs,~~Sf=\sum_{i\in I}\left\langle f,f_i\right\rangle f_i.$$ 
The following reconstruction formula holds:
$$f=\sum_{i\in I}\left\langle f,f_i\right\rangle S^{-1}f_i=\sum_{i\in I}\left\langle f,S^{-1}f_i\right\rangle f_i,$$
where the family $\{\widetilde{f}_i\}_{i\in I}=\{S^{-1}f_i\}_{i\in I}$ is also a frame for $\hs$, which is called the canonical dual frame of $\{f_i\}_{i\in I}$.
The frame $\{g_i\}_{i\in I}$ for $\hs$ is called an alternate dual frame of $\{f_i\}_{i\in I}$ if the following formula holds:
$$f=\sum_{i\in I}\left\langle f,f_i\right\rangle g_i=\sum_{i\in I}\left\langle f,g_i\right\rangle f_i$$
for all $f\in\hs$ \cite{han2000frames}.

Let $\{f_i\}_{i\in I}$ be a frame for $\hs$, for every $J\subset I$, we define the operator
$$S_J=\sum_{i\in J}\left\langle f,f_i\right\rangle f_i,$$
and denote $J^c=I\setminus J$.
 
 In \cite{balan2006signal}, the authors solved a long-standing conjecture of the signal processing community.  They showed that for suitable frames $\{f_i\}_{i\in I}$, a signal $f$ can (up to a global phase) be recovered from the phase-less measurements $\{|\langle f,f_i\rangle|\}_{i\in I}$. Note, that this only shows that reconstruction of $f$ is in principle possible, but there is not an effective constructive algorithm. While searching for such an algorithm, the authors of \cite{balan2005decompositions} discovered a new identity for Parseval frames \cite{balan2007new}. The authors in \cite{guavructa2006some,zhu2010note} generalized these identities to alternate dual frames and got some general results.
 The study of inequalities has interested many mathematicians.  Some authors have extended the equalities and inequalities for frames in Hilbert spaces to generalized  frames \cite{li2018some,li2012some,poria2017some}. The following form was given in \cite{balan2007new} (See \cite{balan2005decompositions} for a discussion of the origins of this fundamental identity).
 
\begin{theorem}\label{thm1.1}
Let $\{f_i\}_{i\in I}$ be a Parseval frame for $\hs$. For every $J\subset I$ and every $f\in\hs$, we have
\begin{equation}\label{1.2}
\sum_{i\in J}\left| \left\langle f,f_i\right\rangle \right| ^2+\left\| \sum_{i\in J^c}\left\langle f,f_i\right\rangle f_i\right\| ^2=\sum_{i\in J^c}\left| \left\langle f,f_i\right\rangle \right| ^2+\left\| \sum_{i\in J}\left\langle f,f_i\right\rangle f_i\right\| ^2\ge \frac{3}{4}\|f\|^2.
\end{equation}
\end{theorem}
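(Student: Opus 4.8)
The plan is to recast every term in \eqref{1.2} as a quadratic form in the partial frame operators $S_J$ and $S_{J^c}$ and then exploit that, for a Parseval frame, these two operators sum to the identity. Reading $S_J$ as the operator $f\mapsto\sum_{i\in J}\langle f,f_i\rangle f_i$, the Parseval hypothesis gives $S_J+S_{J^c}=\mathrm{Id}$, and each $S_J$ is self-adjoint and positive semidefinite since $\langle S_Jf,f\rangle=\sum_{i\in J}|\langle f,f_i\rangle|^2\ge0$; consequently $0\le S_J\le \mathrm{Id}$. With this notation the terms of \eqref{1.2} read
$$\sum_{i\in J}\left|\langle f,f_i\rangle\right|^2=\langle S_Jf,f\rangle,\qquad \left\|\sum_{i\in J^c}\langle f,f_i\rangle f_i\right\|^2=\|S_{J^c}f\|^2=\langle S_{J^c}^2f,f\rangle,$$
and symmetrically for the right-hand side, so the whole statement reduces to an identity and a lower bound for the single self-adjoint operator $T:=S_J$, with $S_{J^c}=\mathrm{Id}-T$.

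For the equality I would compute $S_{J^c}-S_{J^c}^2=(\mathrm{Id}-T)-(\mathrm{Id}-T)^2=T-T^2=S_J-S_J^2$. Pairing against $f$ and rearranging, this is precisely $\langle S_Jf,f\rangle+\langle S_{J^c}^2f,f\rangle=\langle S_{J^c}f,f\rangle+\langle S_J^2f,f\rangle$, i.e. the claimed equality. The underlying reason is that the scalar map $t\mapsto t-t^2$ is symmetric about $t=\tfrac12$, which at the operator level is the symmetry $T\leftrightarrow \mathrm{Id}-T$ built into the splitting $S=S_J+S_{J^c}$.

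For the lower bound I would expand the left-hand quadratic form in $T$ alone, using self-adjointness to write $\langle T^2f,f\rangle=\|Tf\|^2$:
$$\langle S_Jf,f\rangle+\langle S_{J^c}^2f,f\rangle=\langle Tf,f\rangle+\langle(\mathrm{Id}-T)^2f,f\rangle=\|f\|^2-\langle Tf,f\rangle+\|Tf\|^2.$$
The target bound $\ge\tfrac34\|f\|^2$ is then equivalent to $\|Tf\|^2-\langle Tf,f\rangle+\tfrac14\|f\|^2\ge0$, and the decisive observation is that the left-hand side equals $\left\|\left(T-\tfrac12\mathrm{Id}\right)f\right\|^2$, a completed square, hence nonnegative. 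Equality holds exactly on the eigenspace where $S_Jf=\tfrac12 f$.

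I do not expect a genuine analytic obstacle here; the content is the right bookkeeping. The two things to handle carefully are justifying that $S_J$ is self-adjoint and positive (so that $\langle T^2f,f\rangle=\|Tf\|^2$ and $0\le T\le\mathrm{Id}$ are legitimate) and noting that $\langle Tf,f\rangle$ is real before completing the square. Everything else — both the equality via $T\leftrightarrow\mathrm{Id}-T$ and the bound via $\left(T-\tfrac12\mathrm{Id}\right)^2\ge0$ — then follows mechanically, which is exactly the transparent \emph{splitting} structure advertised in the abstract.
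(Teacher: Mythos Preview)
Your argument is correct and is precisely the Parseval specialization of the paper's own method: the paper does not prove Theorem~\ref{thm1.1} directly but recovers it from Theorem~\ref{them2.1}/Corollary~\ref{cor6} with $\lambda=1$, where the key step is that $\varrho(a)=(a-\tfrac12)^2\ge0$ forces $\varrho(U)\ge0$ for $U=S^{-1/2}S_JS^{-1/2}$---and for a Parseval frame $S=I_{\hs}$, so $U=T=S_J$ and your completed square $\bigl\|(T-\tfrac12 I_{\hs})f\bigr\|^2\ge0$ is exactly $\langle\varrho(T)f,f\rangle\ge0$. Likewise your equality argument $T-T^2=(I_{\hs}-T)-(I_{\hs}-T)^2$ is the Parseval case of part~(4) of Theorem~\ref{lem1}.
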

 
 Later on, the author in \cite{guavructa2006some} generalized Theorem \ref{thm1.1} to general frames.
 
\begin{theorem}\label{thm1.2}
	Let $\{f_i\}_{i\in I}$ be a frame for $\hs$ with canonical dual frame $\{\widetilde{f}_i\}_{i\in I}$. Then for every $J\subset I$ and every $f\in\hs$, we have
	\begin{equation}\label{1.3}
	\sum_{i\in J}\left| \left\langle f,f_i\right\rangle \right| ^2+\sum_{i\in I}\left| \left\langle S_{J^c}f,\widetilde{f}_i\right\rangle \right| ^2=\sum_{i\in J^c}\left| \left\langle f,f_i\right\rangle \right| ^2+\sum_{i\in I}\left| \left\langle S_{J}f,\widetilde{f}_i\right\rangle \right| ^2\ge \frac{3}{4}\sum_{i\in I}\left| \left\langle f,f_i\right\rangle \right| ^2.
	\end{equation}
\end{theorem}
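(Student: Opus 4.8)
The plan is to reduce both the claimed identity and the lower bound to statements about the quadratic forms of the two positive semidefinite pieces $S_J$ and $S_{J^c}$ of the frame operator, exploiting the splitting $S=S_J+S_{J^c}$ emphasized in the abstract. Throughout I would write $S_1=S_J$ and $S_2=S_{J^c}$, so that $S=S_1+S_2$ with $S_1,S_2\ge 0$, and I would repeatedly use the elementary frame identity $\sum_{i\in I}|\langle g,f_i\rangle|^2=\langle Sg,g\rangle$, valid for every $g\in\hs$.

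First I would rewrite the two ``dual'' sums. Since $\widetilde f_i=S^{-1}f_i$ and $S^{-1}$ is self-adjoint, we have $\langle S_{2}f,\widetilde f_i\rangle=\langle S^{-1}S_{2}f,f_i\rangle$, so the frame identity applied to $g=S^{-1}S_{2}f$ gives
$$\sum_{i\in I}\left|\langle S_{2}f,\widetilde f_i\rangle\right|^2=\left\langle S\,S^{-1}S_{2}f,\,S^{-1}S_{2}f\right\rangle=\left\langle S^{-1}S_{2}f,\,S_{2}f\right\rangle,$$
and symmetrically $\sum_{i\in I}|\langle S_{1}f,\widetilde f_i\rangle|^2=\langle S^{-1}S_{1}f,S_{1}f\rangle$. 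After this reduction the two expressions equated in \eqref{1.3} become $\langle S_1 f,f\rangle+\langle S^{-1}S_2 f,S_2 f\rangle$ and $\langle S_2 f,f\rangle+\langle S^{-1}S_1 f,S_1 f\rangle$, while the right-hand side is $\tfrac34\langle Sf,f\rangle$.

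For the equality I would compute the difference of these two expressions. Using $\langle S_1f,f\rangle=\langle S_1f,S^{-1}Sf\rangle=\langle S^{-1}S_1f,S_1f\rangle+\langle S^{-1}S_1 f,S_2 f\rangle$ and the analogous identity for $S_2$, the difference collapses to $\langle S^{-1}S_1f,S_2f\rangle-\langle S^{-1}S_2f,S_1f\rangle$. Because $S^{-1},S_1,S_2$ are self-adjoint, these two terms are complex conjugates of one another, so their difference is purely imaginary; since each of the two expressions is manifestly real, the difference must vanish, which is exactly the asserted equality.

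The heart of the matter, and the step I expect to be the main obstacle, is the lower bound, which I would phrase as the operator inequality $S_1+S_2S^{-1}S_2\ge\tfrac34 S$. Substituting $S_1=S-S_2$ and completing the square, the key identity is
$$S_1+S_2S^{-1}S_2-\tfrac34 S=\left(S_2-\tfrac12 S\right)S^{-1}\left(S_2-\tfrac12 S\right),$$
which one checks by expanding the right-hand side. Since $S^{-1}$ is positive definite and $A:=S_2-\tfrac12 S$ is self-adjoint, one has $\langle A S^{-1}A f,f\rangle=\langle S^{-1}(Af),Af\rangle\ge0$ for every $f$, so the right-hand operator is positive semidefinite and the bound follows by evaluating at $f$. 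The genuinely non-obvious point is locating the exact square $\left(S_2-\tfrac12 S\right)S^{-1}\left(S_2-\tfrac12 S\right)$ that reproduces the constant $\tfrac34$; once guessed, the verification is routine, and this identity also makes transparent that equality in the bound occurs precisely when $S_{J^c}f=\tfrac12 Sf$.
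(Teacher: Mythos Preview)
Your proof is correct and follows essentially the same route as the paper: both arguments rest on the splitting $S=S_1+S_2$ and on recognizing that $S_1+S_2S^{-1}S_2-\tfrac34 S$ is a perfect square, the paper arriving at this via the conjugation $U=S^{-1/2}S_1S^{-1/2}$, $V=I-U$ and the identity $\varrho(U)=(U-\tfrac12 I)^2\ge0$, which after conjugating back by $S^{1/2}$ is exactly your $(S_2-\tfrac12 S)S^{-1}(S_2-\tfrac12 S)\ge0$. The only cosmetic differences are that the paper proves the equality as an operator identity ($V+U^2=U+V^2$ from $U+V=I$) rather than via your ``purely imaginary difference of real quantities'' argument, and that it packages the lower bound inside a $\lambda$-parametrized family (Theorem~\ref{them2.1}, Corollary~\ref{cor6}) whose case $\lambda=1$ is precisely Theorem~\ref{thm1.2}.
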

\begin{theorem}\label{thm1.3}
	Let $\{f_i\}_{i\in I}$ be a frame for $\hs$ and $\{g_i\}_{i\in I}$ be an alternate dual frame of $\{f_i\}_{i\in I}$. Then for every $J\subset I$ and every $f\in\hs$, we have
	\begin{equation}\label{1.4}
{\rm Re}\left( \sum_{i\in J}\left\langle f,g_i\right\rangle \overline{\left\langle f,f_i\right\rangle }\right) +\left\| \sum_{i\in J^c}\left\langle f,g_i\right\rangle f_i\right\| ^2={\rm Re}\left( \sum_{i\in J^c}\left\langle f,g_i\right\rangle \overline{\left\langle f,f_i\right\rangle }\right) +\left\| \sum_{i\in J}\left\langle f,g_i\right\rangle f_i\right\| ^2\ge \frac{3}{4}\|f\|^2.
	\end{equation}
\end{theorem}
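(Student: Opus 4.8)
The plan is to route the entire statement through a single operator and then complete one square; this is exactly the ``splitting'' structure the abstract advertises. First I would define, for each $J\subset I$, the mixed partial reconstruction operator $S_J\colon\hs\to\hs$ by $S_Jf=\sum_{i\in J}\langle f,g_i\rangle f_i$. Because $\{g_i\}_{i\in I}$ is an alternate dual of $\{f_i\}_{i\in I}$, the reconstruction formula $f=\sum_{i\in I}\langle f,g_i\rangle f_i$ yields $S_J+S_{J^c}=\mathrm{Id}$, which is the splitting of the identity that drives everything. With this notation the four quantities in \eqref{1.4} become ${\rm Re}\big(\sum_{i\in J}\langle f,g_i\rangle\overline{\langle f,f_i\rangle}\big)={\rm Re}\langle S_Jf,f\rangle$ and $\big\|\sum_{i\in J^c}\langle f,g_i\rangle f_i\big\|^2=\|S_{J^c}f\|^2=\|(\mathrm{Id}-S_J)f\|^2$, together with the two terms obtained by interchanging $J$ and $J^c$.

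For the equality I would expand $\|(\mathrm{Id}-S_J)f\|^2=\|f\|^2-2\,{\rm Re}\langle S_Jf,f\rangle+\|S_Jf\|^2$. Substituting this into the left-hand side of \eqref{1.4} collapses it to $\|f\|^2-{\rm Re}\langle S_Jf,f\rangle+\|S_Jf\|^2$; carrying out the same computation on the right-hand side with the roles of $J$ and $J^c$ swapped, and using ${\rm Re}\langle S_{J^c}f,f\rangle=\|f\|^2-{\rm Re}\langle S_Jf,f\rangle$ (again a consequence of $S_J+S_{J^c}=\mathrm{Id}$), produces the identical expression. Thus the equality needs nothing beyond the splitting of the identity, and in particular is insensitive to the fact that $S_J$ need not be self-adjoint for a general alternate dual.

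The lower bound then follows from a completion of the square applied to the common value: $\|f\|^2-{\rm Re}\langle S_Jf,f\rangle+\|S_Jf\|^2=\tfrac34\|f\|^2+\big\|S_Jf-\tfrac12 f\big\|^2\ge\tfrac34\|f\|^2$. I expect the only genuine difficulty to be conceptual rather than computational: one must introduce the correct mixed operator $S_J$ (with $g_i$ in the coefficient and $f_i$ in the summand) and then resist assuming self-adjointness or positivity of $S_J$, since neither holds for an arbitrary alternate dual. The completion of the square is arranged so that only the real part ${\rm Re}\langle S_Jf,f\rangle$ intervenes, which is precisely the quantity the statement controls; the constant $\tfrac34$ appears as $1-\tfrac14$ coming from $\big\|S_Jf-\tfrac12 f\big\|^2$, and one anticipates that replacing $\tfrac12$ by a free parameter $\lambda$ is exactly what yields the $\lambda$-parametrized family of inequalities promised in the abstract.
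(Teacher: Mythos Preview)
Your argument is correct and coincides with the paper's: define the partial reconstructions $U=S_{J^c}$, $V=S_J$ with $U+V=I_{\hs}$, and complete the square. The paper packages the same computation at the operator level via Lemma~\ref{lem2}, namely $U^*U+\lambda(V^*+V)=(U-\lambda I_{\hs})^*(U-\lambda I_{\hs})+\lambda(2-\lambda)I_{\hs}\ge \lambda(2-\lambda)I_{\hs}$, and then takes $\lambda=\tfrac12$ (Remark after Theorem~\ref{thm14}); your $\|S_Jf-\tfrac12 f\|^2\ge 0$ is exactly this identity evaluated on $f$ with $\lambda=\tfrac12$. One minor difference: the paper only derives the lower bound this way and quotes the equality in \eqref{1.4} from the literature, whereas you give the short direct verification of the equality as well; and, as you correctly anticipate, replacing $\tfrac12$ by a free $\lambda$ is precisely how the paper obtains its parametrized Theorem~\ref{thm14}.
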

%

In this paper, we first consider the splitting $S=S_J+S_{J^c}$ of  frame operator  and study the properties of splittings. Then we generalized the above inequalities to a more general form which involve a scalar $\lambda\in\rs$. These inequalities involve the expressions $\langle S_Jf,f \rangle$, $\|S_Jf\|$, etc., where $S_J$ is a ``truncated form'' of the  frame operator. 
\section{Results and New proofs}
\begin{theorem}\label{lem1}
	Let $S:\hs\rightarrow\hs$ be a bound, self-adjoint positive definite operator. Furthermore, let $S_1, S_2:\hs\rightarrow\hs$ be bounded, self-adjoint, and positive semidefinite with $S=S_1+S_2$. Then the following are true:
	\begin{enumerate}
		\item  For $i\in\{1,2\}$, we have  $0\le S_iS^{-1}S_i$.
		\item We have $S_2+S_1S^{-1}S_1\le S$.
		\item We have $S_1S^{-1}S_1+S_2S^{-1}S_2\le S$.
		\item $S_2+S_1S^{-1}S_1=S_1+S_2S^{-1}S_2$.
		\item If $p,~q\in\rs$ are chosen such that $\varrho(a):=a^2-a\cdot(q-p-1)+1-q\ge 0$ for all $a\in[0,1]$, then we have
		$$p\cdot S_1+q\cdot S_2\le S_2+S_1S^{-1}S_1.$$
		\item If $p,~q\in\rs$ are chosen such that $\eta(a):=a^2-a(1+p)+q+p\ge 0 $ for all $a\in[0,1]$, then we have
		$$S_1-S_1S^{-1}S_1\le p\cdot S_2+q\cdot S_2.$$
		\item If $p,~q\in\rs$ are chosen such that  $\tau(a):=a^2+a\cdot (\frac{q-p}{2}-1)+\frac{1-q}{2}\ge 0$ for all $a\in[0,1]$, then we have
		$$p\cdot S_1+q\cdot S_2\le S_1S^{-1}S_1+S_2S^{-1}S_2.$$
	\end{enumerate}
	In all of these statement, we write $U\le V$ for all operators $U,~V:\hs\rightarrow\hs$ if $U,~V$ are self-adjoint, and if furthermore $V-U$ is positive semidefinite.
\end{theorem}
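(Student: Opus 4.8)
The plan is to reduce every one of the seven statements to an inequality for a single real variable on the interval $[0,1]$, by conjugating the whole problem with $S^{-1/2}$. Since $S$ is bounded, self-adjoint and positive definite, the operators $S^{1/2}$ and $S^{-1/2}$ exist, are self-adjoint and positive, and commute with $S$. First I would introduce the one operator
\[
T := S^{-1/2} S_1 S^{-1/2},
\]
which is self-adjoint because $S_1$ and $S^{-1/2}$ are. The observation that drives the whole argument is that $T$ is squeezed between $0$ and $I$: from $S_1\ge 0$ one gets $T=S^{-1/2}S_1 S^{-1/2}\ge 0$ (congruence by an operator preserves positive semidefiniteness), while from $S_2=S-S_1\ge 0$ one gets $S^{-1/2}S_2 S^{-1/2}=I-T\ge 0$, i.e. $T\le I$. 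Hence $0\le T\le I$, and by the spectral theorem $\sigma(T)\subseteq[0,1]$.

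Next I would set up a dictionary translating every operator in the statement into a function of $T$ conjugated by $S^{1/2}$. Using the identity $S^{1/2}S^{-1}S^{1/2}=I$ one finds
\begin{align*}
S_1 &= S^{1/2} T S^{1/2}, & S_1 S^{-1} S_1 &= S^{1/2} T^2 S^{1/2},\\
S_2 &= S^{1/2}(I-T) S^{1/2}, & S_2 S^{-1} S_2 &= S^{1/2}(I-T)^2 S^{1/2}.
\end{align*}
The general principle I would then invoke repeatedly is that for any bounded self-adjoint $M$ the congruence $S^{1/2} M S^{1/2}$ is positive semidefinite if and only if $M$ is (because $S^{1/2}$ is invertible, so $S^{1/2}x$ runs over all of $\hs$), and for a real polynomial $g$ one has $g(T)\ge 0$ exactly when $g\ge 0$ on $\sigma(T)$, for which $g\ge 0$ on $[0,1]$ is sufficient. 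Thus every assertion ``$U\le V$'' of the theorem, once both sides are written in the form $S^{1/2}(\,\cdot\,)S^{1/2}$, collapses to a scalar statement ``$g(a)\ge 0$ for $a\in[0,1]$'' with an explicit $g$.

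With this machinery the seven parts become one-variable checks. Parts (1)--(4) are the unconditional cases: (1) reduces to $a^2\ge 0$ and $(1-a)^2\ge 0$; (2) to $(1-a)+a^2\le 1$, i.e. $a(1-a)\ge 0$; (3) to $a^2+(1-a)^2\le 1$, i.e. $2a(1-a)\ge 0$; and (4) is the scalar identity $(1-a)+a^2=a+(1-a)^2$ (both sides $=1-a+a^2$), which upgrades to the operator equality. For the parametrized parts I would compute the difference of the two sides in the $T$-picture; for instance in (5) one gets $(S_2+S_1S^{-1}S_1)-(pS_1+qS_2)=S^{1/2}\,g(T)\,S^{1/2}$ with $g(a)=a^2+(q-p-1)a+(1-q)$, and in (7) the difference equals $S^{1/2}\,2\tau(T)\,S^{1/2}$ with $\tau$ exactly as stated (the harmless factor $2$ not affecting positivity); (6) is treated identically. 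In each case the stated hypothesis that the associated polynomial is nonnegative on $[0,1]$ is precisely and only what is needed.

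The main obstacle is really the only nonmechanical point: recognizing this change of variable and establishing the two-sided bound $0\le T\le I$; once that is in hand, positivity-preservation under congruence by $S^{1/2}$ together with the reduction to a single variable on $[0,1]$ makes each inequality transparent. A care point I would watch in (5)--(6) is the exact sign of the linear coefficient when expanding the differences, so that the scalar polynomial actually obtained is matched correctly against the stated $\varrho$ and $\eta$, in the same way that the computation in (7) reproduces $\tau$ verbatim.
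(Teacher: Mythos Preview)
Your proposal is correct and follows essentially the same route as the paper: the paper likewise introduces $U:=S^{-1/2}S_1S^{-1/2}$ (your $T$) and $V=I-U$, establishes $0\le U\le I$, uses the equivalence $X\ge 0\Leftrightarrow S^{1/2}XS^{1/2}\ge 0$, and then reduces each item to the nonnegativity of the corresponding polynomial in $U$ via the spectral calculus. Your caution about the sign of the linear coefficient in (5) is well placed: the correct polynomial arising from the difference is $a^2+(q-p-1)a+(1-q)$, which is what the paper actually uses in its applications, so the minus sign in the statement of $\varrho$ is a typo.
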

\begin{proof}
We first prove the following elementary fact: If $P:\hs\rightarrow\hs$ is a bounded positive definite operator, then a self-adjoint, bounded operator $X:\hs\rightarrow\hs$ is positive semidefinite if and only if $PXP$ is. Indeed, if $X$ is positive semidefinite, then $\langle PXPf,f\rangle=\langle XPf,Pf\rangle$ for all $f\in\hs$, so that $PXP$ is positive semidefinite. Conversely, if $PXP\ge 0$, we can apply what we just showed with $P^{-1}$ instead of $P$ to see $X=P^{-1}(PXP)P^{-1}\ge 0$. Overall, this means
\begin{equation}\label{2.1}
\forall U,~V:\hs\rightarrow\hs {\rm ~self-adjoint~ and~ }~P:\hs \rightarrow\hs~{\rm ~positive~definite~ }:~U\le V\Leftrightarrow PUP\le PVP.
\end{equation}
Note that $S^{-1/2}$ is positive definite and bounded, so that the operators $U:=S^{-1/2}S_1S^{-1/2}$ and $U:=S^{-1/2}S_12S^{-1/2}$ are positive semidefinite and bounded. Furthermore,
\begin{equation}\label{2.2}
U+V=S^{-1/2}(S_1+S_2)S^{-1/2}=S^{-1/2}SS^{-1/2}=I_{\hs}.
\end{equation}
Now, we properly start the proof:

(1). Since $U,~V$ are positive semidefinite, we have $0\le U\le U+V=I_{\hs}$, and thus $I_{\hs}-U\ge 0$. Since $I_{\hs}-U$ and $U$ commute, this implies $U-U^2=U\cdot (I_{\hs}-U)\ge 0$, i.e., $0\le U^2\le U$. In view of \eqref{2.1}, this implies $0\le S^{1/2}U^2S^{1/2}\le S^{1/2}US^{1/2}$. But since $S^{1/2}US^{1/2}=S_1$ and $S^{1/2}U^2S^{1/2}=S_1S^{-1}S_1$, this implies the claim of the first part for $i=1$. The proof for $i=2$ is similar.

(2). In view of \eqref{2.2} (with $P=S^{-1/2}$), in view of the definition of $U,~V$ and because of $V=I_{\hs}-U$ (see \eqref{2.2}), we have the following equivalence:
$$S_2+S_1S^{-1}S_1\le S\Leftrightarrow V+UU\le I_{\hs}.$$
But we saw in the previous part that $U^2\le U$, so that $V+UU\le V+U=I_{\hs}$ does hold.

(3). Part (1) shows $S_iS^{-1}S_i\le S_i$ for $i\in{1,2}$. Hence, $S_1S^{-1}S_1+S-2S^{-1}S_2\le S_1+S_2=S$.

(4). By multiplying from the left and from the right by $S^{-1/2}$, we see that the claimed identity is equivalent to $V+UU=U+VV$. Because of $V=I_{\hs}
-V$, this is in tun equivalent to 
$$I_{\hs}-U+UU=U+(I_{\hs}-U)(I_{\hs}-U),$$
which is easy seen to be true by expanding the right-hand side.

(5). In view of \eqref{2.1} (with $P=S^{-1/2}$), from the definition of $U,~V$, and because of $V=I_{\hs}-U$ (see \eqref{2.2}), we have the following equivalence:
\begin{align*}
p\cdot S_1+q\cdot S_2\le S_2+S_1S^{-1}S_1&\Leftrightarrow p\cdot U+q\cdot V\le V+UU\\
&\Leftrightarrow p\cdot U+q\cdot I_{\hs}-q\cdot U\le I_{\hs}-U+UU\\
&\Leftrightarrow U^2+U\cdot (q-p-1)+(1-q)\cdot I_{\hs}\ge 0\\
&\Leftrightarrow\varrho(U)\ge 0.
\end{align*}
But in the proof of part (1) of the Theorem \ref{lem1}, we saw $0\le U\le I_{\hs}$. Since we have $\varrho\ge 0$ on $[0,1]$ by assumption, elementary properties of the spectral calculus (see e.g. \cite[Theorem 4.2]{lang1993real}) imply that $\varrho(U)$ is positive semidefinite, as desired.

(6). Just as in the proof of the previous part, we get the following equivalence:
\begin{align*}
S_1-S_1S^{-1}S_1\le p\cdot S_2+q\cdot S&\Leftrightarrow U-UU\le p\cdot V+q\cdot I_{\hs}\\
&\Leftrightarrow U-U^2\le (p+q)\cdot I_{\hs}-p\cdot U\\
&\Leftrightarrow U^2-(1+p)\cdot U+(p+q)\cdot I_{\hs}\ge 0\\
&\Leftrightarrow\eta(U)\ge 0.
\end{align*}
Again, just as in the proof of the previous part, we see that $\eta(U)$ is indeed positive semidefinite, since $0\le U\le I_{\hs}$ and since $\eta\ge 0$ on $[0,1]$ by assumption.

(7). Just as in part (5) of the Theorem \ref{lem1}, we get the following equivalence:
\begin{align*}
p\cdot S_1+q\cdot S_2\le S_1S^{-1}S_1+S_2S^{-1}S_2&\Leftrightarrow p\cdot U+q\cdot V\le UU+VV\\
&\Leftrightarrow p\cdot U+q\cdot (I_{\hs}-U)\le UU+(I_{\hs}-U)(I_{\hs}-U)\\
&\Leftrightarrow 2U^2+U\cdot (q-p-2)+I_{\hs}\cdot (1-q)\ge 0\\
&\Leftrightarrow\tau(U)\ge 0.
\end{align*}
Again, just as in the proof of the previous part, we see that $\tau(U)$ is indeed positive semidefinite, since $0\le U\le I_{\hs}$ and since $\tau\ge 0$ on $[0,1]$ by assumption.
\end{proof}

\begin{theorem}\label{them2.1}
Let $S:\hs\rightarrow\hs$ be a bound, self-adjoint positive definite operator. Furthermore, let $S_1, S_2:\hs\rightarrow\hs$ be bounded, self-adjoint, and positive semidefinite with $S=S_1+S_2$.  Then for any $\lambda\in\rs$, we have
\begin{equation}\label{2.3}
\left( \lambda-\frac{\lambda^2}{4}\right)\cdot S_1+\left( 1-\frac{\lambda^2}{4}\right) \cdot S_2\le S_2+S_1S^{-1}S_1=S_1+S_2S^{-1}S_2\le S.
\end{equation}
\end{theorem}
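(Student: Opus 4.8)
The plan is to read off the two rightmost relations directly from Theorem~\ref{lem1} and to extract the new lower bound from its part (5) by a judicious choice of parameters. Indeed, the equality $S_2+S_1S^{-1}S_1=S_1+S_2S^{-1}S_2$ is exactly part (4) of Theorem~\ref{lem1}, and the upper bound $S_2+S_1S^{-1}S_1\le S$ is exactly part (2). So the only relation requiring new work is the leftmost inequality
$$\left(\lambda-\frac{\lambda^2}{4}\right)\cdot S_1+\left(1-\frac{\lambda^2}{4}\right)\cdot S_2\le S_2+S_1S^{-1}S_1.$$

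To establish it, I would invoke part (5) of Theorem~\ref{lem1} with the specific choice
$$p:=\lambda-\frac{\lambda^2}{4},\qquad q:=1-\frac{\lambda^2}{4}.$$
Part (5) yields $p\cdot S_1+q\cdot S_2\le S_2+S_1S^{-1}S_1$ as soon as the associated quadratic $\varrho(a)=a^2-a\cdot(q-p-1)+1-q$ is nonnegative on $[0,1]$, so everything reduces to verifying this one polynomial condition for the above $p$ and $q$.

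The crux of the argument, and really its only nontrivial content, is that these coefficients have been engineered precisely so that $\varrho$ becomes a perfect square. Computing $q-p-1=-\lambda$ and $1-q=\frac{\lambda^2}{4}$, one finds
$$\varrho(a)=a^2+\lambda a+\frac{\lambda^2}{4}=\left(a+\frac{\lambda}{2}\right)^2\ge 0$$
for every $a\in\rs$, hence in particular on $[0,1]$. As this nonnegativity is the sole hypothesis demanded by part (5), the lower bound follows and the chain \eqref{2.3} is complete. I do not expect any genuine obstacle: all of the spectral and operator-theoretic machinery has already been absorbed into Theorem~\ref{lem1}, and the only ingredient specific to this statement is the observation that the prescribed $p,q$ collapse $\varrho$ into $\left(a+\tfrac{\lambda}{2}\right)^2$, which is manifestly nonnegative for every value of the free parameter $\lambda$.
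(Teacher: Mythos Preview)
Your proposal is correct and follows essentially the same route as the paper: both invoke parts (2), (4), and (5) of Theorem~\ref{lem1} with the identical choice $p=\lambda-\tfrac{\lambda^2}{4}$, $q=1-\tfrac{\lambda^2}{4}$, reducing everything to the nonnegativity of a single quadratic. The only discrepancy is a sign: the paper's proof (using the form of $\varrho$ derived in the proof of part~(5), which has $+a\cdot(q-p-1)$ rather than the $-a\cdot(q-p-1)$ printed in the statement) obtains $\varrho(a)=(a-\tfrac{\lambda}{2})^2$ instead of your $(a+\tfrac{\lambda}{2})^2$, but since either is a perfect square the conclusion is unaffected.
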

\begin{proof}
The middle identity is a direct consequence of part (4) of Theorem \ref{lem1}. Likewise, the final estimate follows directly from part (2) of Theorem \ref{lem1}.

To prove the first part of the equation \eqref{2.3}, we want to apply part (5) of Theorem \ref{lem1} with the choices $p=\lambda-\frac{\lambda^2}{4}$ and $q=1-\frac{\lambda^2}{4}$. With these choices, the polynomial $\varrho$ from Theorem \ref{lem1} takes the form
$$\varrho(a)=a^2+a\cdot(q-p-1)+1-q=a^2-\lambda a+\frac{\lambda^2}{4}=\left( a-\frac{\lambda}{2}\right) ^2,$$
so that $\varrho(a)\ge 0$ for all $a\in[0,1]$, as required by part (5) of Theorem \ref{lem1}. An application of that part of the Theorem \ref{lem1} completes the proof.
\end{proof}
By choosing $S$ to be the frame operator, and by choosing $S_1:=S_J$ and $S_2:=S_{J^c}$, we see that $S$, $S_1$ and $S_2$ are all bounded, self-adjoint, positive semi-definite, that $S$ is positive definite, and that $S=S_1+S_2$. Furthermore, directly from the definitions, we see
 \begin{align}\label{2.4}
\langle S_if,f\rangle&=\sum_{i\in J_i}\left\| \langle f,f_i\rangle\right\|^2,\nonumber\\
 \langle Sf,f\rangle&=\sum_{i\in I}\left\| \langle f,f_i\rangle\right\|^2,\\
\langle S_iS^{-1}S_if,f_i\rangle=\langle S(S^{-1}S_if),S^{-1}S_if\rangle &=\sum_{i\in I}\left\| \langle S^{-1}S_if,f_i\rangle\right\|^2.\nonumber
 \end{align}
\begin{corollary}\label{cor6}
Let $\{f_i\}_{i\in I}$ be a frame for $\hs$ with frame operator $S$. Then for any $\lambda\in\rs$, for all $J\subset I$, and any $f\in\hs$, we have
 \begin{align}\label{2.5}
\left( \lambda-\frac{\lambda^2}{4}\right)\cdot \sum_{i\in J}\left\| \langle f,f_i\rangle\right\|^2+	\left( 1-\frac{\lambda^2}{4}\right) \cdot\sum_{i\in J^c}\left\| \langle f,f_i\rangle\right\|^2&\le \sum_{i\in J^c}\left\| \langle f,f_i\rangle\right\|^2+\sum_{i\in I}\left\| \langle S^{-1}S_Jf,f_i\rangle\right\|^2\nonumber\\
&=\sum_{i\in J}\left\| \langle f,f_i\rangle\right\|^2+\sum_{i\in I}\left\| \langle S^{-1}S_{J^c}f,f_i\rangle\right\|^2\nonumber\\
&\le \sum_{i\in I}\left\| \langle f,f_i\rangle\right\|^2.
\end{align}
\end{corollary}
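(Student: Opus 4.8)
The plan is to specialize the abstract operator inequality of Theorem~\ref{them2.1} to the splitting of the frame operator and then read off the scalar inequality \eqref{2.5} by evaluating both sides on a fixed vector $f$ through the map $U\mapsto\langle Uf,f\rangle$. Concretely, I would take $S$ to be the frame operator of $\{f_i\}_{i\in I}$ and set $S_1:=S_J$ and $S_2:=S_{J^c}$. The first step is to check that this choice meets the hypotheses of Theorem~\ref{them2.1}: each partial frame operator $S_J$ is bounded, self-adjoint, and positive semidefinite; their sum is $S_J+S_{J^c}=S$; and $S$ is positive definite precisely because $\{f_i\}_{i\in I}$ is a frame. Theorem~\ref{them2.1} then applies directly and yields the operator chain
\begin{equation*}
\left(\lambda-\frac{\lambda^2}{4}\right)S_J+\left(1-\frac{\lambda^2}{4}\right)S_{J^c}\le S_{J^c}+S_JS^{-1}S_J=S_J+S_{J^c}S^{-1}S_{J^c}\le S.
\end{equation*}

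The second step is to pass from operators to scalars. Since $U\le V$ means exactly that $\langle Uf,f\rangle\le\langle Vf,f\rangle$ for every $f\in\hs$, and since an operator identity yields the corresponding identity of quadratic forms, I would apply the functional $U\mapsto\langle Uf,f\rangle$ termwise to the chain above. It then remains to rewrite each quadratic form as a sum of frame coefficients, which is the content of the identities \eqref{2.4}. Three of these are immediate from the definition of the partial frame operator, namely $\langle S_Jf,f\rangle=\sum_{i\in J}\abs{\langle f,f_i\rangle}^2$, $\langle S_{J^c}f,f\rangle=\sum_{i\in J^c}\abs{\langle f,f_i\rangle}^2$, and $\langle Sf,f\rangle=\sum_{i\in I}\abs{\langle f,f_i\rangle}^2$.

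The one step requiring genuine care is the evaluation of the cross term $\langle S_JS^{-1}S_Jf,f\rangle$. Here I would set $g:=S^{-1}S_Jf$, so that $Sg=S_Jf$, and use the self-adjointness of $S_J$ to write $\langle S_JS^{-1}S_Jf,f\rangle=\langle S^{-1}S_Jf,S_Jf\rangle=\langle g,Sg\rangle$; since $S$ is self-adjoint this equals $\langle Sg,g\rangle$. Because $S$ is the frame operator, $\langle Sg,g\rangle=\sum_{i\in I}\abs{\langle g,f_i\rangle}^2=\sum_{i\in I}\abs{\langle S^{-1}S_Jf,f_i\rangle}^2$, which is exactly the term appearing in \eqref{2.5}; the analogous computation with $J$ replaced by $J^c$ handles $\langle S_{J^c}S^{-1}S_{J^c}f,f\rangle$. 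Substituting these five identities into the scalarized chain reproduces \eqref{2.5} verbatim, completing the proof. I expect no real obstacle beyond this bookkeeping, since the substance of the estimate has already been carried out at the operator level in Theorem~\ref{them2.1}.
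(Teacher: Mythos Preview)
Your proposal is correct and follows exactly the same approach as the paper: specialize Theorem~\ref{them2.1} to $S_1=S_J$, $S_2=S_{J^c}$, then pass to scalars via $U\mapsto\langle Uf,f\rangle$ using the identities \eqref{2.4}. Your write-up is in fact more detailed than the paper's, which simply cites the ``translation table'' \eqref{2.4} and the definition of $U\le V$ without spelling out the computation of $\langle S_JS^{-1}S_Jf,f\rangle$.
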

\begin{proof}
We choose $S_1,~S_2$ as outlined before equation \eqref{2.4}. In view of the ``translation table'' in equation \eqref{2.4}, and by the definition of the relation ``$U\le V$'' for self-adjoint operator $U,~V$, the equation \eqref{2.5} is equivalent to 
 \eqref{2.3}.  By Theorem \ref{them2.1}, the result holds.
\end{proof}
\begin{remark}
If we take $\lambda=1$ in \eqref{2.5}, Corollary \ref{cor6} is equal to Theorem \ref{thm1.2}. If we consider $S$ as a fusion frame operator in Theorem \ref{them2.1}, we can easy get the  \cite[Theorem 3]{li2017somee}. If we consider $S$ as a HS-frame operator in Theorem \ref{them2.1}, we can easy get the \cite[Theorem 3.5]{poria2017some}.
\end{remark}

\begin{theorem}\label{them7}
	Let $S:\hs\rightarrow\hs$ be a bound, self-adjoint positive definite operator. Furthermore, let $S_1, S_2:\hs\rightarrow\hs$ be bounded, self-adjoint, and positive semidefinite with $S=S_1+S_2$.  Then for any $\lambda\in\rs$, we have
	\begin{equation}\label{2.6}
0\le S_1-S_1S^{-1}S_1\le (\lambda-1)\cdot S_2+\left( 1-\frac{\lambda}{2}\right) ^2\cdot S.
	\end{equation}
\end{theorem}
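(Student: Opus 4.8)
The plan is to mirror the structure of the proof of Theorem \ref{them2.1}: the two-sided estimate \eqref{2.6} splits into a left inequality that comes straight from part (1) of Theorem \ref{lem1} and a right inequality that comes from part (6), provided I choose the parameters $p,q$ so that the auxiliary polynomial $\eta$ becomes a perfect square. No new machinery is needed; the work reduces to identifying the correct $p$ and $q$.

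For the left inequality $0\le S_1-S_1S^{-1}S_1$, I would invoke part (1) of Theorem \ref{lem1}. More precisely, its proof establishes $U^2\le U$ for the operator $U:=S^{-1/2}S_1S^{-1/2}$; translating back via \eqref{2.1} (with $P=S^{1/2}$) yields $S_1S^{-1}S_1=S^{1/2}U^2S^{1/2}\le S^{1/2}US^{1/2}=S_1$, which is exactly the assertion $S_1-S_1S^{-1}S_1\ge 0$.

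For the right inequality I would apply part (6) of Theorem \ref{lem1} with the choices $p=\lambda-1$ and $q=\left(1-\frac{\lambda}{2}\right)^2$. With these values the conclusion of part (6) reads $S_1-S_1S^{-1}S_1\le (\lambda-1)\cdot S_2+\left(1-\frac{\lambda}{2}\right)^2\cdot S$, which is precisely the right-hand estimate in \eqref{2.6}. It then remains only to verify the hypothesis $\eta\ge 0$ on $[0,1]$. Substituting, one computes $1+p=\lambda$ and $q+p=\frac{\lambda^2}{4}$, so that
$$\eta(a)=a^2-a(1+p)+q+p=a^2-\lambda a+\frac{\lambda^2}{4}=\left(a-\frac{\lambda}{2}\right)^2\ge 0$$
for every real $a$, and in particular for all $a\in[0,1]$, as required.

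There is no genuine obstacle here once the parameters are guessed; the single substantive idea is the same one used in Theorem \ref{them2.1}, namely to force the quadratic into the form $\left(a-\frac{\lambda}{2}\right)^2$, which pins down $p$ and $q$ uniquely. Once the square is recognized, the spectral calculus applied to $0\le U\le I_{\hs}$ (established in the proof of part (1)) gives $\eta(U)\ge 0$, and translating back via \eqref{2.1} closes the argument.
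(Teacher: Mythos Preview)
Your proposal is correct and follows essentially the same approach as the paper: the left inequality comes from part (1) of Theorem \ref{lem1}, and the right inequality from part (6) with the choices $p=\lambda-1$, $q=(1-\lambda/2)^2$, which make $\eta(a)=(a-\lambda/2)^2$. The only difference is that you spell out slightly more detail on why part (1) yields $S_1S^{-1}S_1\le S_1$.
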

\begin{proof}
The first estimate of equation \eqref{2.6} is a direct consequence of part (1) of Theorem \ref{lem1}. 
To prove the second estimate, we want to apply part (6) of Theorem \ref{lem1}, with $p=\lambda-1$ and $q=\left( 1-\frac{\lambda}{2}\right) ^2=1-\lambda+\frac{\lambda^2}{4}$. With these choices, the polynomial $\eta$ from the Theorem \ref{lem1} takes the form 
$$\eta(a)=a^2-a\cdot(1+p)+q+p=a^2-\lambda a+\frac{\lambda^2}{4}=\left( a-\frac{\lambda}{2}\right) ^2,$$
so that $\eta(a)\ge 0$ for all $a\in[0,1]$, as required by part (6) of  Theorem \ref{lem1}. An application of that theorem thus finishes the proof.
\end{proof}
\begin{corollary}Let $\{f_i\}_{i\in I}$ be a frame for $\hs$ with frame operator $S$. Then for any $\lambda\in\rs$, for all $J\subset I$, and any $f\in\hs$, we have
	\begin{align*}
0&\le \sum_{i\in J}\left\| \langle f,f_i\rangle\right\|^2-\sum_{i\in I}\left\| \langle S^{-1}S_Jf,f_i\rangle\right\|^2
	\le (\lambda-1)\cdot\sum_{i\in J^c}\left\| \langle f,f_i\rangle\right\|^2+\left( 1-\frac{\lambda}{2}\right) ^2\cdot\sum_{i\in I}\left\| \langle f,f_i\rangle\right\|^2.
	\end{align*}
\end{corollary}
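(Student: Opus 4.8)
The plan is to derive this corollary from Theorem \ref{them7} in exactly the same way that Corollary \ref{cor6} was derived from Theorem \ref{them2.1}. First I would instantiate the abstract operator setup: take $S$ to be the frame operator of $\{f_i\}_{i\in I}$, and set $S_1:=S_J$ and $S_2:=S_{J^c}$. As already observed in the passage preceding equation \eqref{2.4}, these choices are legitimate, since $S$ is bounded, self-adjoint and positive definite, the operators $S_J,S_{J^c}$ are bounded, self-adjoint and positive semidefinite, and $S=S_J+S_{J^c}$. Hence Theorem \ref{them7} applies verbatim and yields, for every $\lambda\in\rs$, the operator inequality \eqref{2.6}, namely
$$0\le S_1-S_1S^{-1}S_1\le (\lambda-1)\cdot S_2+\left(1-\tfrac{\lambda}{2}\right)^2\cdot S.$$

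Next I would translate this operator inequality into the claimed scalar inequality. By the definition of the order relation ``$U\le V$'' for self-adjoint operators recalled at the end of Theorem \ref{lem1}, the chain above is equivalent to
$$0\le \langle (S_1-S_1S^{-1}S_1)f,f\rangle\le \langle\big((\lambda-1)S_2+(1-\tfrac{\lambda}{2})^2 S\big)f,f\rangle$$
holding for all $f\in\hs$. At this point I would invoke the ``translation table'' \eqref{2.4} term by term: $\langle S_1 f,f\rangle=\sum_{i\in J}\|\langle f,f_i\rangle\|^2$, $\langle S_2 f,f\rangle=\sum_{i\in J^c}\|\langle f,f_i\rangle\|^2$, $\langle Sf,f\rangle=\sum_{i\in I}\|\langle f,f_i\rangle\|^2$, and crucially the third line of \eqref{2.4}, which gives $\langle S_1S^{-1}S_1 f,f\rangle=\sum_{i\in I}\|\langle S^{-1}S_Jf,f_i\rangle\|^2$. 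Substituting these and using linearity of the inner product in the right-hand term reproduces precisely the displayed double inequality of the corollary.

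The only point requiring care—and it is genuinely minor—is that the coefficient $\lambda-1$ may be negative for $\lambda<1$, so one cannot pretend that each summand on the right is individually nonnegative. This is not an obstacle, however, because the inequality is established at the \emph{operator} level in \eqref{2.6} before any evaluation against a test vector $f$; passing to quadratic forms $\langle\,\cdot\,f,f\rangle$ preserves the inequality regardless of the signs of the coefficients, exactly by the definition of positive semidefiniteness. Thus I expect no real obstruction: the substantive work has already been carried out in Theorem \ref{lem1} (part (6)) and Theorem \ref{them7}, and this corollary is purely the frame-theoretic dictionary applied to that operator statement, mirroring the proof of Corollary \ref{cor6}.
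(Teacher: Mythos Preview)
Your proposal is correct and follows essentially the same approach as the paper's own proof, which simply instantiates $S_1=S_J$, $S_2=S_{J^c}$, invokes the translation table \eqref{2.4}, and appeals to Theorem \ref{them7}. Your version is just a more detailed spelling-out of exactly that argument.
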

\begin{proof}
By choosing $S_1=S_J$ and $S_2=S_{J^c}$, and by using the ``translation table'' given in equation \eqref{2.4}, we see that the claim is equivalent to \eqref{2.6}, and result holds by Theorem \ref{them7}.
\end{proof}
\begin{remark}
If we consider $S$ as a fusion frame operator in Theorem \ref{them7}, we can easy get the  \cite[Theorem 5]{li2017somee}. If we consider $S$ as a g-frame operator in Theorem \ref{them7} for Hilbert C*-modules, we can easy get the  \cite[Theorem 2.4]{xiang2016new}. 
\end{remark}
\begin{theorem}\label{thm10}
	Let $S:\hs\rightarrow\hs$ be a bound, self-adjoint positive definite operator. Furthermore, let $S_1, S_2:\hs\rightarrow\hs$ be bounded, self-adjoint, and positive semidefinite with $S=S_1+S_2$.  Then for any $\lambda\in\rs$, we have
\begin{equation}\label{2.7}
\left( 2\lambda-\frac{\lambda^2}{2}-1\right)\cdot S_1+\left( 1-\frac{\lambda^2}{2}\right)\cdot S_2\le S_1S^{-1}S_1+S_2S^{-1}S_2\le S.
\end{equation}
\end{theorem}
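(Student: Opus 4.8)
The plan is to follow the exact template established in the proofs of Theorem \ref{them2.1} and Theorem \ref{them7}: the double inequality \eqref{2.7} is nothing but an instance of the master Theorem \ref{lem1}, so the only real work is to identify which parts of that theorem to invoke, to pick the correct parameters $p,q$, and then to verify that the associated polynomial is nonnegative on $[0,1]$.

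First I would dispatch the final estimate $S_1S^{-1}S_1+S_2S^{-1}S_2\le S$. This is verbatim the conclusion of part (3) of Theorem \ref{lem1}, so no additional argument is needed there.

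For the first estimate, the plan is to apply part (7) of Theorem \ref{lem1} with the choices
$$p=2\lambda-\frac{\lambda^2}{2}-1,\qquad q=1-\frac{\lambda^2}{2}.$$
With these substitutions the coefficient of $S_1$ on the left-hand side of the conclusion of part (7) is exactly $p$ and the coefficient of $S_2$ is exactly $q$, so the left-hand side matches that of \eqref{2.7}. The key step is then to evaluate the polynomial $\tau$ from part (7) at these parameters. One computes $\frac{q-p}{2}=1-\lambda$ and $\frac{1-q}{2}=\frac{\lambda^2}{4}$, so that
$$\tau(a)=a^2+a\cdot\left( \frac{q-p}{2}-1\right) +\frac{1-q}{2}=a^2-\lambda a+\frac{\lambda^2}{4}=\left( a-\frac{\lambda}{2}\right) ^2\ge 0.$$
In particular $\tau(a)\ge 0$ for every $a\in[0,1]$ (indeed for every real $a$), which is precisely the hypothesis demanded by part (7). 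Invoking that part of Theorem \ref{lem1} then yields the left-hand inequality of \eqref{2.7}, and combined with the previous paragraph this completes the proof.

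The only genuinely delicate point is the algebraic bookkeeping in simplifying $\tau$: the values of $p$ and $q$ are reverse-engineered so that the linear and constant terms collapse into the perfect square $\left( a-\frac{\lambda}{2}\right)^2$, which is the very same square that surfaces in the proofs of Theorem \ref{them2.1} and Theorem \ref{them7}. Once this identity is checked, nonnegativity on $[0,1]$ is immediate, and the spectral-calculus reasoning already packaged inside part (7) of Theorem \ref{lem1} supplies the conclusion.
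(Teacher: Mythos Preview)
Your proposal is correct and matches the paper's own proof essentially line for line: the upper bound is handled by part (3) of Theorem \ref{lem1}, and the lower bound by part (7) with $p=2\lambda-\frac{\lambda^2}{2}-1$, $q=1-\frac{\lambda^2}{2}$, after which the same perfect-square simplification $\tau(a)=(a-\lambda/2)^2$ verifies the nonnegativity hypothesis.
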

\begin{proof}
The second of these inequalities is a direct consequence of part (3) of Theorem \ref{lem1}.
To prove the first estimate, we want to involve part (7) of Theorem \ref{lem1} with $p=2\lambda-\frac{\lambda^2}{2}-1$ and $q=1-\frac{\lambda^2}{2}$. With these choices, the polynomial $\tau$ from Theorem \ref{lem1} takes the form
$$\tau(a)=a^2+a\cdot \left( \frac{q-p}{2}-1\right) +\frac{1-q}{2}=a^2-\lambda a+\frac{\lambda^2}{4}=\left( a-\frac{\lambda}{2}\right) ^2,$$
so that $\tau(a)\ge 0$ for all $a\in[0,1]$, as required in part (7)  of Theorem \ref{lem1}. An application of that theorem thus finishes the proof.
\end{proof}

\begin{corollary}Let $\{f_i\}_{i\in I}$ be a frame for $\hs$ with frame operator $S$. Then for any $\lambda\in\rs$, for all $J\subset I$, and any $f\in\hs$, we have
	\begin{align*}
	\left( 2\lambda-\frac{\lambda^2}{2}-1\right)\cdot  \sum_{i\in J}\left\| \langle f,f_i\rangle\right\|^2+\left( 1-\frac{\lambda^2}{2}\right)\cdot \sum_{i\in J^c}\left\| \langle f,f_i\rangle\right\|^2
	&\le \sum_{i\in I}\left\| \langle S^{-1}S_Jf,f_i\rangle\right\|^2+\sum_{i\in I}\left\| \langle S^{-1}S_{J^c}f,f_i\rangle\right\|^2\nonumber\\
&	\le \sum_{i\in I}\left\| \langle f,f_i\rangle\right\|^2.
	\end{align*}
\end{corollary}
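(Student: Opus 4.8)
The plan is to deduce this corollary from Theorem \ref{thm10} exactly as Corollary \ref{cor6} was deduced from Theorem \ref{them2.1}: the abstract operator inequality \eqref{2.7} becomes the concrete frame inequality once we make the right identification of operators and read off the ``translation table'' \eqref{2.4}. So the whole argument is a matter of instantiating the general result rather than proving anything new.

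First I would set $S_1 := S_J$ and $S_2 := S_{J^c}$, where $S$ is the frame operator of $\{f_i\}_{i\in I}$. As noted in the paragraph preceding \eqref{2.4}, these choices satisfy the hypotheses of Theorem \ref{thm10}: $S$ is bounded, self-adjoint and positive definite; $S_J$ and $S_{J^c}$ are bounded, self-adjoint and positive semidefinite; and $S = S_J + S_{J^c}$ holds directly from the definition of $S_J$. Hence Theorem \ref{thm10} applies and, for every $\lambda\in\rs$, gives the operator inequality
\begin{equation*}
\left( 2\lambda-\frac{\lambda^2}{2}-1\right)\cdot S_J+\left( 1-\frac{\lambda^2}{2}\right)\cdot S_{J^c}\le S_J S^{-1}S_J + S_{J^c}S^{-1}S_{J^c}\le S.
\end{equation*}

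Next I would pair this operator inequality against an arbitrary $f\in\hs$. By the defining convention ``$U\le V$ iff $\langle(V-U)f,f\rangle\ge 0$ for all $f$'' recorded at the end of Theorem \ref{lem1}, the displayed operator inequality is equivalent to the scalar inequality obtained by taking $\langle\,\cdot\, f,f\rangle$ of each term. I then substitute the three entries of the translation table \eqref{2.4}, namely $\langle S_J f,f\rangle=\sum_{i\in J}|\langle f,f_i\rangle|^2$, $\langle S_{J^c}f,f\rangle=\sum_{i\in J^c}|\langle f,f_i\rangle|^2$, $\langle Sf,f\rangle=\sum_{i\in I}|\langle f,f_i\rangle|^2$, and most importantly $\langle S_J S^{-1}S_J f,f\rangle=\sum_{i\in I}|\langle S^{-1}S_J f,f_i\rangle|^2$ (with the analogous formula for $J^c$). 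These substitutions turn the operator inequality verbatim into the two chained inequalities asserted by the corollary, which completes the proof.

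There is essentially no genuine obstacle here; the only point requiring a little care is the third line of \eqref{2.4}, i.e. confirming that $\langle S_J S^{-1}S_J f,f\rangle$ really equals $\sum_{i\in I}|\langle S^{-1}S_J f,f_i\rangle|^2$. This follows because $S_J$ is self-adjoint, so $\langle S_J S^{-1}S_J f,f\rangle=\langle S(S^{-1}S_J f),S^{-1}S_J f\rangle=\langle S g,g\rangle$ with $g=S^{-1}S_J f$, and then the frame-operator identity $\langle Sg,g\rangle=\sum_{i\in I}|\langle g,f_i\rangle|^2$ gives the stated expression. Once this bookkeeping is in place, the corollary is immediate.
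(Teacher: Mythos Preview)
Your proposal is correct and follows exactly the same approach as the paper: instantiate Theorem~\ref{thm10} with $S_1=S_J$, $S_2=S_{J^c}$, then translate the operator inequality \eqref{2.7} via the table \eqref{2.4}. The extra verification you include for the identity $\langle S_J S^{-1}S_J f,f\rangle=\sum_{i\in I}|\langle S^{-1}S_J f,f_i\rangle|^2$ is already recorded in \eqref{2.4}, so it is optional but harmless.
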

\begin{proof}
By choosing $S_1=S_J$ and $S_2=S_{J^c}$, and by using the ``translation table'' given in equation \eqref{2.4}, we see that the claim is equivalent to \eqref{2.7}. Then the result holds by Theorem \ref{thm10}.
\end{proof}
\begin{remark}
If we consider $S$ as a continue fusion frame operator in Theorem \ref{them7}, we can easy get the  \cite[Theorem 2.13]{li2018some}. If we consider $S$ as a g-frame operator in Theorem \ref{them7} for Hilbert C*-modules, we can easy get the  \cite[Theorem 2.4]{xiang2016new}. 
\end{remark}

Next, we give a new type of inequality of frames of Theorem \ref{thm1.3}. We first need follow lemma.
\begin{lemma}\label{lem2}
Let $U,V$ be two bounded linear operator in $\hs$  and $U+V=I_{\hs}$, then for any $\lambda\in\rs$, we have
$$U^*U+\lambda\cdot (V^*+V)\ge\lambda(2-\lambda)\cdot I_{\hs}.$$
\end{lemma}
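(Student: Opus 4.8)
The plan is to eliminate $V$ using the constraint $U + V = I_{\hs}$ and then recognize the resulting operator inequality as a completed square. Since $U + V = I_{\hs}$, we have $V = I_{\hs} - U$ and hence $V^* = I_{\hs} - U^*$, so that $V^* + V = 2 I_{\hs} - (U^* + U)$. Substituting this into the left-hand side of the claimed inequality gives
$$U^*U + \lambda\cdot(V^* + V) = U^*U - \lambda\cdot(U^* + U) + 2\lambda\cdot I_{\hs}.$$

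First I would move the constant term $2\lambda\cdot I_{\hs}$ to the right-hand side and observe that, since $\lambda(2-\lambda) = 2\lambda - \lambda^2$, the claimed inequality is equivalent to
$$U^*U - \lambda\cdot(U^* + U) + \lambda^2\cdot I_{\hs} \ge 0.$$

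The key step is then to recognize the left-hand side as a perfect square at the operator level. Because $\lambda \in \rs$, the operator $\lambda\cdot I_{\hs}$ is self-adjoint, so setting $T := U - \lambda\cdot I_{\hs}$ I obtain
$$T^*T = (U^* - \lambda\cdot I_{\hs})(U - \lambda\cdot I_{\hs}) = U^*U - \lambda\cdot(U^* + U) + \lambda^2\cdot I_{\hs}.$$
Since $\langle T^*Tf, f \rangle = \|Tf\|^2 \ge 0$ for every $f \in \hs$, the operator $T^*T$ is positive semidefinite, which is precisely the inequality needed.

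I expect no serious obstacle here: the argument is simply a completion of the square, and the only points requiring care are the bookkeeping of the adjoints and the explicit use of the hypothesis $\lambda \in \rs$, which guarantees $(\lambda\cdot I_{\hs})^* = \lambda\cdot I_{\hs}$ so that the two cross terms combine into $-\lambda\cdot(U^* + U)$. Unlike the earlier parts of the paper, this lemma invokes neither the spectral calculus nor the positive-definiteness of any auxiliary operator; it rests only on the elementary fact that every operator of the form $T^*T$ is positive semidefinite.
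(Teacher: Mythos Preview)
Your proof is correct and follows essentially the same route as the paper: substitute $V=I_{\hs}-U$, complete the square to recognize $(U-\lambda I_{\hs})^*(U-\lambda I_{\hs})$, and invoke the positivity of $T^*T$. The only cosmetic difference is that the paper keeps $2\lambda\cdot I_{\hs}$ on the left and adds and subtracts $\lambda^2\cdot I_{\hs}$, whereas you move the constant to the right first; your explicit remark that $\lambda\in\rs$ is needed for $(\lambda I_{\hs})^*=\lambda I_{\hs}$ is a nice point the paper leaves implicit.
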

\begin{proof}
Since $U+V=I_{\hs}$, we have 
	\begin{align*}
U^*U+\lambda(V^*+V)&=U^*U-\lambda(U^*+U)+2\lambda\cdot I_{\hs}\\
&=U^*U-\lambda\cdot (U^*+U)+2\lambda\cdot I_{\hs}+\lambda^2\cdot I_{\hs}-\lambda^2\cdot I_{\hs}\\
&=(U-\lambda\cdot I_{\hs})^*(U-\lambda\cdot I_{\hs})+\lambda(2-\lambda)\cdot I_{\hs}\\
&	\ge\lambda(2-\lambda)\cdot I_{\hs}.
\end{align*}
\end{proof} 
\begin{theorem}\label{thm14}
Let $\{f_i\}_{i\in I}$ be a frame for $\hs$ and $\{g_i\}_{i\in I}$ be an alternate dual frame of $\{f_i\}_{i\in I}$. Then for any $\lambda\in\rs$, for all $J\subset I$, and any $f\in\hs$, we have
	\begin{equation*}
{\rm Re}\left( \sum_{i\in J}\left\langle f,g_i\right\rangle \overline{\left\langle f,f_i\right\rangle }\right) +\left\| \sum_{i\in J^c}\left\langle f,g_i\right\rangle f_i\right\| ^2\ge (2\lambda-\lambda^2)\cdot {\rm Re}\left( \sum_{i\in J}\left\langle f,g_i\right\rangle \overline{\left\langle f,f_i\right\rangle }\right) +(1-\lambda^2)\cdot {\rm Re}\left( \sum_{i\in J^c}\left\langle f,g_i\right\rangle \overline{\left\langle f,f_i\right\rangle }\right) 
\end{equation*}
\end{theorem}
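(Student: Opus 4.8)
The plan is to mirror the operator-splitting philosophy used throughout the paper and to reduce the claimed inequality to Lemma \ref{lem2}. First I would introduce the two bounded linear operators
$$ Uf := \sum_{i\in J^c}\langle f,g_i\rangle f_i, \qquad Vf := \sum_{i\in J}\langle f,g_i\rangle f_i, $$
so that $Uf+Vf = \sum_{i\in I}\langle f,g_i\rangle f_i = f$ for every $f\in\hs$, because $\{g_i\}_{i\in I}$ is an alternate dual of $\{f_i\}_{i\in I}$. Hence $U+V=I_{\hs}$, which is exactly the hypothesis of Lemma \ref{lem2}. Note that I deliberately attach $U$ to the index set $J^c$: the norm-squared term in the statement runs over $J^c$, and it is this term that should line up with the $U^*U$ appearing in the lemma.

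Next I would set up a short translation table between the scalar expressions in the statement and quadratic forms of these operators. A direct computation gives $\langle Vf,f\rangle = \sum_{i\in J}\langle f,g_i\rangle\overline{\langle f,f_i\rangle}$ and $\langle Uf,f\rangle = \sum_{i\in J^c}\langle f,g_i\rangle\overline{\langle f,f_i\rangle}$, so that the two real-part terms in the statement are $\tfrac12\langle(V+V^*)f,f\rangle$ and $\tfrac12\langle(U+U^*)f,f\rangle$, while the norm term is $\|\sum_{i\in J^c}\langle f,g_i\rangle f_i\|^2 = \langle U^*Uf,f\rangle$. Crucially, taking $\langle(U+V)f,f\rangle=\|f\|^2$ and reading off real parts yields the linking identity ${\rm Re}\langle Uf,f\rangle + {\rm Re}\langle Vf,f\rangle = \|f\|^2$.

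I would then rearrange the desired inequality into a single positivity statement. Moving the right-hand side across and collecting the coefficient $1-(2\lambda-\lambda^2)=(1-\lambda)^2$ in front of the $J$-term, then eliminating ${\rm Re}\langle Vf,f\rangle$ via the linking identity and using $(1-\lambda)^2+(1-\lambda^2)=2(1-\lambda)$, reduces the whole claim to
$$ \langle U^*Uf,f\rangle + (1-\lambda)\langle(V+V^*)f,f\rangle \ge (1-\lambda^2)\|f\|^2 . $$
This is precisely Lemma \ref{lem2} applied with the parameter $1-\lambda$ in place of $\lambda$, since $(1-\lambda)\bigl(2-(1-\lambda)\bigr)=1-\lambda^2$. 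Equivalently, one can bypass the lemma entirely and observe that the left-hand side minus the right-hand side equals $\|(U-(1-\lambda)I_{\hs})f\|^2\ge 0$ after completing the square.

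I expect the only genuine obstacle to be bookkeeping rather than anything conceptual. The two choices that must be made correctly are: attaching $U$ to $J^c$ so that $U^*U$ matches the norm term, and invoking the lemma with the shifted parameter $1-\lambda$ rather than $\lambda$. Once these are fixed, the remaining work is the routine algebra of substituting ${\rm Re}\langle Vf,f\rangle = \|f\|^2 - {\rm Re}\langle Uf,f\rangle$ and simplifying the polynomial coefficients, so no spectral-calculus argument (as in Theorem \ref{lem1}) is needed here.
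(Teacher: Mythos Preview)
Your proposal is correct and follows essentially the same route as the paper: define $U,V$ via the $J^c$-- and $J$--partial reconstruction operators so that $U+V=I_{\hs}$, translate the scalar sums into $\langle U^*Uf,f\rangle$, ${\rm Re}\langle Uf,f\rangle$, ${\rm Re}\langle Vf,f\rangle$, and reduce to Lemma~\ref{lem2}. The only cosmetic difference is that the paper invokes the lemma with parameter $\lambda$ (yielding the right-hand side with the $J$- and $J^c$-coefficients interchanged, which is equivalent to the stated form via $\lambda\mapsto 1-\lambda$), whereas your choice of $1-\lambda$ lands directly on the statement as written; your completing-the-square observation $\|(U-(1-\lambda)I_{\hs})f\|^2\ge 0$ is exactly the content of the lemma's proof.
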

\begin{proof}
For any $J\subset I$ and $f\in\hs$, we define operators $U,V$ as
$$Uf=\sum_{i\in J^c}\left\langle f,g_i\right\rangle f_i,~~~Vf=\sum_{i\in J}\left\langle f,g_i\right\rangle f_i.$$
Clearly, $U,~V$ are bounded linear operator and $U+V=I_{\hs}$. From Lemma \ref{lem2}, for any $f\in\hs$, we have 
$$\left\langle U^*Uf,f\right\rangle +\lambda\overline{\left\langle Vf,f\right\rangle}+\lambda\left\langle Vf,f\right\rangle\ge (2\lambda-\lambda^2)\left\langle I_{\hs}f,f\right\rangle,\eqno(2.13)$$
and then,
$$\|Uf\|^2+2\lambda{\rm Re}\left\langle Vf,f\right\rangle \ge (2\lambda-\lambda^2){\rm Re}\left\langle I_{\hs}f,f\right\rangle,$$
which implies
\begin{align*}
\|Uf\|^2&\ge (2\lambda-\lambda^2){\rm Re}\left\langle I_{\hs}f,f\right\rangle-2\lambda{\rm Re}\left\langle Vf,f\right\rangle\\
&=(2\lambda-\lambda^2){\rm Re}\left\langle (U+V)f,f\right\rangle-2\lambda{\rm Re}\left\langle Vf,f\right\rangle\\
&=(2\lambda-\lambda^2){\rm Re}\left\langle Uf,f\right\rangle -\lambda^2{\rm Re}\left\langle Vf,f\right\rangle \\
&=(2\lambda-\lambda^2){\rm Re}\left\langle Uf,f\right\rangle+(1-\lambda^2){\rm Re}\left\langle Vf,f\right\rangle -{\rm Re}\left\langle Vf,f\right\rangle.
\end{align*}
Hence 
$$\|Uf\|^2+{\rm Re}\left\langle Vf,f\right\rangle\ge (2\lambda-\lambda^2){\rm Re}\left\langle Uf,f\right\rangle+(1-\lambda^2){\rm Re}\left\langle Vf,f\right\rangle,$$
thus
	\begin{equation*}
{\rm Re}\left( \sum_{i\in J}\left\langle f,g_i\right\rangle \overline{\left\langle f,f_i\right\rangle }\right) +\left\| \sum_{i\in J^c}\left\langle f,g_i\right\rangle f_i\right\| ^2\ge (2\lambda-\lambda^2)\cdot {\rm Re}\left( \sum_{i\in J}\left\langle f,g_i\right\rangle \overline{\left\langle f,f_i\right\rangle }\right) +(1-\lambda^2)\cdot {\rm Re}\left( \sum_{i\in J^c}\left\langle f,g_i\right\rangle \overline{\left\langle f,f_i\right\rangle }\right) 
\end{equation*}
\end{proof}
In the sequel we give a more general result. Consider a bounded sequence of complex numbers $\{a_i\}_{i\in I}$. In Theorem \ref{thm14} we take 
$$Uf=\sum_{i\in J^c}a_i\left\langle f,g_i\right\rangle f_i,~~~Vf=\sum_{i\in J}(1-a_i)\left\langle f,g_i\right\rangle f_i.$$
We can get the following result.
\begin{corollary}
Let $\{f_i\}_{i\in I}$ be a frame for $\hs$ and $\{g_i\}_{i\in I}$ be an alternate dual frame of $\{f_i\}_{i\in I}$. Then for all bounded sequence $\{a_i\}_{i\in I}$ we have
\begin{align*}
&{\rm Re}\left( \sum_{i\in J}(1-a_i)\left\langle f,g_i\right\rangle \overline{\left\langle f,f_i\right\rangle }\right) +\left\| \sum_{i\in J^c}a_i\left\langle f,g_i\right\rangle f_i\right\| ^2\\
&\ge (2\lambda-\lambda^2)\cdot {\rm Re}\left( \sum_{i\in J}(1-a_i)\left\langle f,g_i\right\rangle \overline{\left\langle f,f_i\right\rangle }\right) +(1-\lambda^2)\cdot {\rm Re}\left( \sum_{i\in J^c}a_i\left\langle f,g_i\right\rangle \overline{\left\langle f,f_i\right\rangle }\right) 
\end{align*}
\end{corollary}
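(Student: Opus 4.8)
The plan is to reduce the statement to Theorem \ref{thm14} by the operator substitution indicated just above its statement: set
\[
Uf=\sum_{i\in J^c}a_i\left\langle f,g_i\right\rangle f_i,\qquad Vf=\sum_{i\in J}(1-a_i)\left\langle f,g_i\right\rangle f_i.
\]
First I would check that $U$ and $V$ are well-defined bounded linear operators on $\hs$. Since $\{a_i\}_{i\in I}$ is bounded and both $\{f_i\}_{i\in I}$ and its alternate dual $\{g_i\}_{i\in I}$ are frames, the analysis operator $f\mapsto\{\langle f,g_i\rangle\}_{i\in I}$ is bounded into $\ell^2(I)$, multiplication by the bounded multiplier $\{a_i\}$ (resp. $\{1-a_i\}$) preserves membership in $\ell^2(I)$, and the synthesis operator against $\{f_i\}_{i\in I}$ is bounded; composing these shows $U$ and $V$ are bounded.

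The decisive step, and the one I expect to be the main obstacle, is to establish $U+V=I_{\hs}$, since this is exactly the hypothesis under which Lemma \ref{lem2} and the subsequent computation in the proof of Theorem \ref{thm14} operate. Here one must invoke the alternate-dual reconstruction formula $f=\sum_{i\in I}\left\langle f,g_i\right\rangle f_i$ and verify that the coefficient pattern ``$a_i$ on $J^c$ together with $1-a_i$ on $J$'' recombines to the full reconstruction. I would treat this identity with care, because it is precisely what constrains the way the multiplier $\{a_i\}$ may enter; note that the choice $a_i\equiv 1$ on $J^c$ and $a_i\equiv 0$ on $J$ recovers verbatim the operators appearing in the proof of Theorem \ref{thm14}, so the present corollary is the ``weighted'' version of that argument. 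This bookkeeping is the only genuinely delicate point.

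Once $U+V=I_{\hs}$ is secured, the remainder is a line-by-line repetition of the proof of Theorem \ref{thm14}. I would apply Lemma \ref{lem2} to obtain $U^{*}U+\lambda(V^{*}+V)\ge\lambda(2-\lambda)I_{\hs}$, pair both sides against $f$, and use $\langle U^{*}Uf,f\rangle=\|Uf\|^{2}$ to get $\|Uf\|^{2}+2\lambda\,{\rm Re}\langle Vf,f\rangle\ge(2\lambda-\lambda^{2})\|f\|^{2}$. Substituting $\|f\|^{2}={\rm Re}\langle(U+V)f,f\rangle={\rm Re}\langle Uf,f\rangle+{\rm Re}\langle Vf,f\rangle$ on the right and rearranging then produces the coefficient $2\lambda-\lambda^{2}$ in front of ${\rm Re}\langle Uf,f\rangle$ and $1-\lambda^{2}$ in front of ${\rm Re}\langle Vf,f\rangle$, exactly as before. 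Finally I would translate back through
\[
\|Uf\|^{2}=\left\|\sum_{i\in J^c}a_i\left\langle f,g_i\right\rangle f_i\right\|^{2},\quad {\rm Re}\langle Vf,f\rangle={\rm Re}\sum_{i\in J}(1-a_i)\left\langle f,g_i\right\rangle\overline{\left\langle f,f_i\right\rangle},\quad {\rm Re}\langle Uf,f\rangle={\rm Re}\sum_{i\in J^c}a_i\left\langle f,g_i\right\rangle\overline{\left\langle f,f_i\right\rangle},
\]
which converts the operator inequality into the claimed frame inequality and completes the argument.
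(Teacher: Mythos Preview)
Your approach is exactly the paper's: define the same operators $U,V$ and rerun the computation from Theorem~\ref{thm14} via Lemma~\ref{lem2}. You are also right to single out the verification of $U+V=I_{\hs}$ as the only genuinely delicate step.

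In fact your caution there is fully justified, because with $U$ and $V$ as literally written---both in the paper's setup for this corollary and in your proposal---the identity $U+V=I_{\hs}$ \emph{does not hold} in general. For instance, taking $a_i\equiv 0$ gives $Uf=0$ and $Vf=\sum_{i\in J}\langle f,g_i\rangle f_i$, so $(U+V)f=\sum_{i\in J}\langle f,g_i\rangle f_i\neq f$ unless $J=I$. The coefficient pattern ``$a_i$ on $J^c$ and $1-a_i$ on $J$'' simply does not recombine to the full reconstruction for arbitrary bounded $\{a_i\}$. This is a slip in the paper's own formulation rather than a defect of your strategy: the intended (and standard) choice is
\[
Uf=\sum_{i\in I}a_i\langle f,g_i\rangle f_i,\qquad Vf=\sum_{i\in I}(1-a_i)\langle f,g_i\rangle f_i,
\]
with both sums over all of $I$; the weights $a_i,1-a_i$ then replace the role of the characteristic functions of $J^c,J$ from Theorem~\ref{thm14}. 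With that correction $U+V=I_{\hs}$ follows immediately from the alternate-dual reconstruction formula, and the remainder of your argument (applying Lemma~\ref{lem2}, pairing against $f$, substituting $\|f\|^2={\rm Re}\langle Uf,f\rangle+{\rm Re}\langle Vf,f\rangle$, and translating back) goes through verbatim.
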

 \begin{remark}
 If we take $\lambda=\frac{1}{2}$ Theorem \ref{thm14}, we can obtain the inequality in Theorem \ref{thm1.3}.
 \end{remark}


\bibliographystyle{plain}
\bibliography{my}

\end{document}